
\documentclass[11pt,twoside]{amsart}
\usepackage{latexsym}
\usepackage{amssymb,amsbsy,amsmath,amsfonts,amssymb,amscd,amsthm}
\usepackage{graphicx}
\usepackage{hyperref}
\usepackage{pdfsync}
\usepackage{cancel}
\usepackage{color}
\setlength{\oddsidemargin}{0mm}
\setlength{\evensidemargin}{0mm}
\setlength{\topmargin}{5mm}
\setlength{\textheight}{22cm}
\setlength{\textwidth}{17cm}
\newcommand{\commentout}[1]{}
\newcommand{\ind}[1]{\mbox{ \bf 1}_{\{#1\} }}

\newcommand {\wh}[1] {{\widehat #1}}

\newcommand{\E}{\mathbb{E}}
\newcommand{\R}{\mathbb{R}}

\newcommand {\e}  {\varepsilon}

\newcommand {\lb} {\lambda}

\newcommand {\sgn} { {\rm sgn} }
\newcommand {\dv}  { {\rm div} }
\newcommand {\f}   {\frac}
\newcommand {\p}   {\partial}
\newcommand{\dis}{\displaystyle}
\newcommand{\beq}{\begin{equation}}
\newcommand{\beqa}{\begin{eqnarray}}
\newcommand{\bea} {\begin{array}{rl}}
\newcommand{\beqan}{\begin{eqnarray*}}
\newcommand{\eeq}{\end{equation}}
\newcommand{\eeqa}{\end{eqnarray}}
\newcommand{\eeqan}{\end{eqnarray*}}
\newcommand{\eea} {\end{array}}
\newtheorem{theorem}{Theorem}[section]
\newtheorem{lemma}[theorem]{Lemma}
\newtheorem{definition}[theorem]{Definition}

\newcommand{\cqfd}{{ \hfill
                       {\unskip\kern 6pt\penalty 500
                       \raise -2pt\hbox{\vrule\vbox to 6pt{\hrule width 6pt
                       \vfill\hrule}\vrule} \par}   }}

\title{ {\bf{\small   Scalar conservation laws with rough (stochastic) fluxes; }}
\\[2pt]
{\small the spatially dependent case}}

\author{ Pierre-Louis Lions$^{1}$, Beno\^ \i t Perthame$^{2}$ and Panagiotis E. Souganidis$^{3,4}$}
\date{\today}

\begin{document}
\maketitle
\vspace*{1.0cm}
\pagenumbering{arabic}

\begin{abstract}

We continue the development of the theory of pathwise stochastic entropy solutions  for scalar conservation laws in $\R^N$ with quasilinear multiplicative ``rough path'' dependence by considering inhomogeneous  fluxes and a single rough path like, for example, a Brownian motion.
 Following our previous note where we considered spatially independent fluxes,
we introduce the notion of pathwise stochastic entropy solutions and prove that it is well posed, that is  we establish existence, uniqueness and continuous dependence in the form of a
(pathwise) $L^1$-contraction.
Our approach is motivated by  the theory of stochastic viscosity solutions, which was introduced and developed
by two of the authors, to study fully nonlinear first- and second-order stochastic pde with multiplicative noise. This  theory relies on special test functions constructed by inverting locally the flow of the stochastic characteristics. For conservation laws this is best implemented at the level of the kinetic formulation which we follow here. 

\end{abstract}

\pagestyle{plain} \vspace*{1.0cm}

\noindent {\bf Key words.}  stochastic differential equations, stochastic conservation laws, stochastic entropy condition, kinetic formulation, dissipative solutions, rough paths.
\\[2mm]
\noindent {\bf Mathematics Subject Classification} 35L65; 35R60; 60H15; 81S30

\section{Introduction}
\label{sec:intro}
\noindent  
%
We continue the development of the theory of pathwise stochastic entropy solutions for scalar conservation laws in $\R^N$ with quasilinear multiplicative ``rough path'' dependence by considering inhomogeneous, that is spatially dependent,  fluxes and a single rough path. In our previous note \cite{LPSscl} we studied spatially independent fluxes and multiple paths.

\smallskip

\noindent Our approach is based  on the concepts and methods introduced in \cite{LPSscl} as well as by Lions and Souganidis  in \cite{LShjscras98, LShjscras98b} and extended by the same authors in \cite{LShjs, LSsemilinear, LSuniqueness, LShjscrasEDP} for the theory of pathwise stochastic viscosity solution of fully nonlinear first- and second-order stochastic pde including stochastic Hamilton-Jacobi equations. One of the fundamental tools of this theory is the class of test functions constructed by inverting locally, and at the level of test functions, the flow of the characteristics corresponding to the stochastic first-order part of the equation and smooth initial data. Such approach is best implemented for conservation laws using the kinetic formulation which we follow here.

\smallskip

\noindent  It is important to remark that throughout the paper we use the term ``rough path'' for a non differentiable time dependent function and we do not make any connection with the Lyons \cite{lyons} theory of rough paths since we are only considering a single path. 
We also note that to keep statements shorter,  we often write sscl for scalar stochastic conservation laws any time we refer to equations of the type of study here.

\smallskip

\noindent  Let
\beq\label{flux}
{\bf{A}} =(A_1,...,A_N) \in C^2(\R^N\times \R;\R^N)
\eeq
 be the flux and consider a single continuous ``rough path'', which may be, for example, a Brownian motion,
 \begin{equation}\label{path}
 W \in C([0,\infty);\R)   \ \text{ with } \ W(0)=0.
 \end{equation}
 
\smallskip

\noindent  We are interested in the sscl
\begin{equation}\label{xscl}
\begin{cases}
du + \dv {\bf{A}}(x, u) \circ dW = 0 \ \text{ in } \ \R^N\times(0,\infty), 
\\[2mm]
u=u^0 \quad \text{ on } \ \R^N \times \{ 0 \}.
\end{cases}
\end{equation}
Note that $u(x,t)= v(x, W(t))$, where $v$ solves a time-homogeneous equation $v_t + \dv {\bf{A}}(x, v)=0,$  is formally a solution but it can not be an entropy solution because the change of time is incompatible with the formation of shocks.

\smallskip

\noindent  Throughout the paper we adopt the notation and terminology of stochastic calculus. In general $du$ denotes some kind of time differential, while, in the case that $W$ is Brownian, it is the usual stochastic differential. Similarly in the general setting $\circ$ does not have any particular meaning and can be ignored, while in the stochastic setting 
it denotes the Stratonovich differential. The need to use the latter stems from the fact that we are developing a theory which is closed (stable) on paths in the local uniform topology and, in this context, Stratanovich is relevant.
That a pathwise theory is more appropriate to study \eqref{xscl} is also justified from the fact that in the stochastic case, taking expectations leads, in view of the properties of the Ito calculus, to terms that are not possible to handle by the available estimates. We refer to \cite{LPSscl} for an extended discussion of this point. 

\smallskip

\noindent  Our interest in sscl is twofold. Given the theory of stochastic viscosity solutions and the connection between conservation laws and Hamilton-Jacobi equations when $N=1$, it is very natural from the mathematical point of view to ask whether there is such a theory for the former. The second reason is that sccl like \eqref{xscl} arise naturally as models in the Lasry-Lions theory of mean field games (\cite{ll1}, \cite{ll2}, \cite{ll3}). We  also refer to \cite{LPSscl} for an example of stochastic system of interacting particles. 

\smallskip

\noindent  If, instead of \eqref{path}, we assume that ${W} \in C^1((0,\infty);\R^N)$, then \eqref{xscl} is a ``classical'' problem with a well known theory; see, for example, the books of Dafermos \cite{DafBook} and Serre \cite{SerreBook}. The solution can develop singularities in the form of shocks (discontinuities). Hence it is necessary to consider entropy solutions which, although not regular, satisfy the $L^1$-contraction property established by Kruzkov that yields  uniqueness.  Entropy solutions, which are based on certain inequalities, cannot be used when {W} is not smooth.

\smallskip

\noindent There are several challenges when trying to extend the analysis of  \cite{LPSscl} to spatially dependent fluxes. The underlying  idea is the same, namely  to use the  characteristics at the kinetic level to ``eliminate the bad stochastic terms''. In the spatially independent setting the characteristics can be solved explicitly and, as a result, it is possible to keep track of all the cancellations that are taking place and, hence, obtain rather strong estimates.  In the inhomogeneous 
setting there are no explicit solution  of the system of the stochastic characteristics. As a result the calculations are not transparent and it becomes necessary to  employ more complicated arguments to study the problem.

\smallskip

\noindent We remark that recently Debussche and Vovelle \cite{dv} (see also Feng and Nualart \cite{fn}, Chen, Ding and Karlsen \cite{cdk}, Debussche, Hofmanov{\'a} and Vovelle \cite{DHV}, Hofmanov{\'a} \cite{Hof1}, \cite{Hof2}, Berthelin and Vovelle \cite{BerVov} and Debussche and Vovelle \cite{DebVov100}) put forward a theory of weak entropy solutions of scalar conservation laws with Ito-type semilinear  stochastic dependence. Such problems do not appear to be amenable to a pathwise theory. Our results do not cover the equations studied in \cite{dv, fn} and vice versa. We refer to Section 6 of \cite{LPSscl} for a discussion of these issues.

\subsection*{Organization of the paper.}
The paper is organized as follows: In Section~\ref{sec:kf} we state the assumptions, we review briefly some facts about entropy solutions and the kinetic formulation  and we introduce the notion of the pathwise solutions. In Section~\ref{technical} we summarize  some of the tools we use and prove a preliminary result. 
Section \ref{sec:proof} and Section \ref{sec:ex} are  devoted respectively to the proofs of the contraction  and intrinsic uniqueness of the stochastic entropy solutions and their existence. In Section \ref{sec:ex} we also state and prove general $L^\infty$- and $L^2$-bounds.

\section{The kinetic formulation and pathwise stochastic entropy solutions}
\label{sec:kf}
\noindent  We consider the sscl \eqref{xscl}. We write
$$ 
\dv {\bf A}(x,u) = \dv_x {\bf A}(x,u) + \dv_u {\bf A}(x,u),
$$ 
where
$$
\dv_x {\bf A}(x,u)=\sum_{i=1}^N A_{i,x_i} (x,u) \ \text{ and } \ \dv_u {\bf  A}(x,u)=\sum_{i=1}^N A_{i,u} (x,u)u_{x_i}, 
$$
and introduce the notation
\beq\label{a}
{\bf a}(x,\xi)=(a_1(x, \xi), \dots, a_N(x, \xi)):={\bf A}_u(x, \xi)= (A_{1,u}(x, \xi), \ldots, A_{N,u}(x, \xi))\\[2mm]   
\eeq
and
\beq\label{b}
b(x, \xi) = \dv_x {\bf A} (x, \xi).
\eeq
\subsection*{Assumptions.} We summarize  here the main assumptions we need in the paper.  In addition to \eqref{flux} and \eqref{path}, we assume 
that 
\beq
b(x,0) = 0,
\label{as:b}
\eeq
and
\beq\label{assumptions1}
\p_j a_i, \;  \p_u a_i,  \;  \p_j b,  \;  \p_u b \in L^\infty (\R^N \times \R)  \ \  \text{for} \ \ 1\leq i,j  \leq N.
\eeq

\subsection*{Entropy solutions for smooth paths and some estimates.}  If $W \in C^1([0,\infty);\R)$ and $u,v$ solve \eqref{xscl}, the classical entropy inequalities yield
$$
d |u-v| +  \dv \big[ [{\bf{A}}(x, u)- {\bf{A}}(x, v)] \; \sgn(u-v) \big] \circ dW \leq  0,
$$
and thus
\beq\label{1}
\f {d}{dt} \int_{\R^N}  |(u-v)(x,t)| dx \leq 0.
\eeq
Since, in view of \eqref{as:b},  $v\equiv 0$ is a solution, the contraction yields the a priori $L^1$-estimate \beq\label{l1}
\| u (\cdot, t) \|_{1} \leq \| u^0 \|_{1} .
\eeq
On the other hand $L^\infty$-estimates, which are also known as invariant regions,  of the form 
\beq\label{linfinity}
 \| u(\cdot, t) \|_{\infty} \leq C(t)
\eeq
for space dependent fluxes are not as general and require some additional work and stronger assumptions.  We give a general argument in Section~\ref{sec:ex} and discuss some examples that  are not covered by it. 

\subsection*{The kinetic formulation for smooth paths.}  We review here the basic concepts of the kinetic theory of scalar conservation laws and we show that it allows to define a change of variable along the ``kinetic'' characteristics which turns out to be a very convenient tool for the study of sscl. 

\smallskip

\noindent  Although we use the notation of the introduction, here we assume that
$W \in C^1([0,\infty);\R),$
in which case $du$ stands for the usual derivative and $\circ$ is the usual multiplication and, hence, should be ignored.

\smallskip

\noindent  The entropy inequalities  (see, for example, \cite{DafBook, SerreBook}), which yield \eqref{1} and guarantee the uniqueness of the entropy solutions, are 
\beq
\left\{\begin{array}{l}
dS(u) + \displaystyle \left[\dv_u ({\bf A}^S(x,u))
+ S'(u)  [\dv_x {\bf A}](x,u) 
- B^S(x,u) \right]  \circ dW \leq 0  \ \text{ in } \ \R^N\times(0,\infty),
\\[5mm]
S( u)=S(u^0) \quad \text{ on } \  \R^N\times\{0\},
\end{array} \right.
\label{eq:rsentr}
\eeq
for all $C^2-$convex functions $S$, entropy fluxes ${\bf A^S}$  and forcing terms ${B^S}$ defined by
$$
{\bf A^S}(x,u)_u = {\bf A}_u(x, u) S'(u)  \quad  \text{ and} \quad  {B^S}(x,u)_u = \dv {\bf A}_u(x, u) S'(u) .
$$

\smallskip

\noindent  It is by now well established that the simplest way to study conservation laws is to use  the kinetic formulation developed in a series of papers -- see Perthame and Tadmor \cite{PTscal}, Lions, Perthame and Tadmor \cite{LPTscal}, Perthame \cite{Peuniq, PeKF}, and Lions, Perthame and Souganidis \cite{LPS1,  LPS2}. The kinetic formulation for inhomogeneous in space but time independent conservation laws was developed by Dalibard \cite{ALD}. The arguments of \cite{ALD} easily extend to problems with smooth multiplicative paths. 
\smallskip

\noindent The idea is to write a linear equation for the nonlinear function $\chi:\R^N \times\R\times(0,\infty) \to \R$ given by 
\beq
\chi(x,\xi, t): = \chi (u(x,t), \xi)=   \left\{\begin{array}{l}
+1 \quad \text{ if } \quad 0 \leq \xi \leq u(x,t),
\\[2mm]
-1  \quad \text{ if } \quad u(x,t) \leq \xi \leq 0,
\\[2mm]
\ \;  0 \qquad \text{ otherwise}.
\end{array} \right.
\label{eq:chi}
\eeq

\smallskip

\noindent  The kinetic formulation says that, if $u^0 \in (L^1\cap L^\infty)(\R^N)$,  the set of entropy inequalities \eqref{eq:rsentr} for all convex entropies $S$ can be replaced by the equation 
\beq \left\{\begin{array}{l}
d \chi + \displaystyle  \dv_x [{\bf a} (x, \xi)\chi ] \circ dW -  \dv_\xi [b (x, \xi)\chi ] \circ dW   = \partial_\xi m dt   \  \text{ in } \ \R^N \times \R \times (0,\infty),
\\[3mm]
\chi =\chi(u^0(\cdot), \cdot) \ \text{ on } \quad \R^N\times\R\times\{0\},
\end{array} \right.
\label{eq:skf}
\eeq
which is supposed to be satisfied in the sense of distributions, where 
\beq\label{measure0}
m \ \text{ is a nonnegative  measure in } \  \R^N \times \R \times (0,\infty)
\eeq
and
\beq\label{measure100}
\begin{cases}
\int_0^T \int_{\R} \int_{\R^N} m(x,\xi,t)dx d\xi dt  \leq \frac{1}{2} \|u^0\|_{2} + \int_0^T \int_\R \int_{\R^N} |b(x,\xi)||\dot W| \chi d\xi dx dt  \  \text{for all $T>0$ \   and} \\[2mm] 
m(x,\xi,t) =0 \  \text{ for } \  |\xi| \geq \|u(\cdot, t)\|_\infty;
\end{cases}
\eeq
note that the estimate on mass is new even in the context of the ``classical'' conservation laws, that is without rough path dependence.
\smallskip

\noindent  We remark that although to prove the equivalence between entropy and kinetic solutions it is necessary to work in $(L^1\cap L^\infty)(\R^N)$, the kinetic theory of \cite{ALD} is set only in $L^1.$ Hence  the condition on the support of the measure, which is equivalent to an $L^\infty$-bound for  $u$, 
is not needed provided it is assumed that $b\in L^1(\R^{N}\times \R).$  
More importantly we note that a priori bound for the total mass of the measure in \eqref{measure100} is useless for irregular paths since it depends on the total variation  of the path.
\smallskip

\noindent Using some of the ideas in this paper it is, however, possible to improve the upper bound so that it becomes independent of the variation of the path. In Section \ref{sec:ex} we sketch the proof of such an estimate as well as an $L^\infty$-bound on the solutions.
We summarize these in
\beq\label{measure}
\begin{cases}
\text{ 
there exists $C_T >0$ such that }  \int_0^T \int_{\R} \int_{\R^N} m(x,\xi,t)dx d\xi dt  \leq C_T \|u^0\|^2_{2}, 
\\[1.5mm]  
\text{and } \ \\[1.5mm]
m(x,\xi,t) =0 \  \text{ for } \  |\xi| \geq \|u(\cdot, t)\|_\infty.\end{cases}
\eeq

\smallskip

\noindent  Notice that the transport equation for $\chi$ has a Hamiltonian type structure and is equivalently written in the strong form 
\beq
d \chi + \displaystyle {\bf a} (x, \xi)\cdot D_x \chi  \circ dW -  b (x, \xi) D_\xi \chi  \circ dW   =\partial_\xi m dt  .
\label{strongform}
\eeq

\smallskip

\noindent  For $\rho^0 \in C^\infty_{\rm b} ( \R^N \times \R)$ and $t_0\geq 0$, let $\rho(\cdot, \cdot ,\cdot; t_0)$ be the solution of the linear stochastic pde
\beq \label{rho}
\begin{cases}
d \rho + [ {\bf a}(x,\xi)\cdot D_x \rho - b(x,\xi)D_\xi \rho] \circ dW=0  \      \text{ in } \  \R^N \times \R \times (t_0,\infty), 
\\[2mm]
\rho = \rho^0  \  \text{on}  \ \R^N \times \R \times \{t_0\}.
\end{cases}
\eeq

\smallskip

\noindent  Since $W$ is a single path, it is immediate that 
$$
\rho (x,\xi,t;t_0)= \hat \rho (x,\xi,W(t)-W(t_0)),
$$
where, in view of the assumptions above, $\hat \rho$ is the smooth solution of 
\beq \label{hatrho}
\begin{cases}
\hat \rho_t + {\bf a}(x,\xi)\cdot D_x \hat \rho - b(x,\xi)D_\xi \hat \rho=0  \      \text{ in } \  \R^N \times \R \times \R, 
\\[2mm]
\hat \rho = \rho^0   \  \text{on}  \ \R^N \times \R \times \{0\}. 
\end{cases}
\eeq

\smallskip

\noindent  Next for  $\rho^0 \in C^\infty_{\rm b} (\R^N \times \R)$, $t_0 \geq 0$ and    $y, \eta \in \R^N$ we consider the solution 
$\rho(\cdot, \cdot, y, \eta, \cdot; t_0)$ of \eqref{rho} with initial datum, at $t=t_0$,  $\rho^0(\cdot - y, \cdot -\eta)$ and introduce
the ``convolution''  along characteristics given by 
\beq
 \rho \star \chi (y, \eta, t; t_0) := \int  \rho (x,\xi, y, \eta, t; t_0)  \chi(x, \xi,t) dx d\xi . 
\label{char_convol}
\eeq

\smallskip

\noindent  Combining  \eqref{strongform} and  \eqref{rho}  we find  that, in the sense of distributions,
\beq
\f{d}{dt} \rho \star \chi (y, \eta, t; t_0) = - \int \p_\xi  \rho (x,\xi, y, \eta , t; t_0) m(x,\xi,t) dx d\xi.
\label{strongform_reg}
\eeq
It follows that, for all  almost every  $t_0 \geq 0$ and almost all $t\geq 0,$
\beq\label{strongform1}
 \rho \star \chi (y, \eta, t) =  \rho \star \chi^0 (y, \eta,t_0; t_0)- \int_{t_0}^{t} \int \p_\xi  \rho (x,\xi, y, \eta , t) m(x,\xi,s) dx d\xi ds .
\eeq
The $L^1$-continuity of $u$ at $t=0$, as stated later, implies that this formula holds for $t_0=0$ and for all  almost every $t\geq 0$.

\smallskip

\noindent  Note that although the regularity of the path was used to derive \eqref{strongform_reg} and \eqref{strongform1}, the  actual statements do not need it and  make sense for paths which are only continuous.
Notice also that \eqref{strongform_reg} and   \eqref{strongform1} are equivalent to the kinetic formulation when the measure $m$ satisfies \eqref{measure0} and \eqref{measure}.

\subsection*{Pathwise stochastic entropy solutions.} 
Neither the notion of entropy solution nor the kinetic formulation can be used to study \eqref{xscl}, since both involve either entropy inequalities or the sign of the defect measure quantities which do not make sense for equations/expressions with, in principle, are nowhere differentiable functions. We refer to \cite{LSsemilinear, LSuniqueness, LShjscras98,LShjscras98b} for a general discussion about the difficulties encountered when attempting to use the classical weak solution approaches to study fully nonlinear stochastic pde.

\smallskip

\noindent Motivated by our previous work \cite{LPSscl} as well as the theory of stochastic viscosity solutions (\cite{LSsemilinear, LSuniqueness, LShjscras98,LShjscras98b}) we use  \eqref{strongform_reg} to introduce next the notion of pathwise stochastic entropy solutions for SSCL. 
Recall, as remarked previously, that the statement of  \eqref{strongform_reg} does not rely on any regularity of the paths.

\smallskip

\noindent We have:
\begin{definition} \label{def:xsolution} 
Assume \eqref{flux}, \eqref{path},  \eqref{as:b} and \eqref{assumptions1}. Then $u\in (L^1\cap L^\infty)(\R^N \times (0,T))$, for all $T>0$,  is a pathwise stochastic entropy solution of \eqref{xscl}, if there exists a nonnegative bounded measure $m$ on $\R^N \times \R \times (0,\infty)$ satisfying \eqref{measure} such that \eqref{strongform_reg} and \eqref{strongform1} hold for all $\rho$'s given by \eqref{char_convol} with $\rho^0 \in C^\infty_{\rm b}(\R^N\times \R)$,  almost all $t_0 \geq 0$ and almost everywhere in $t\in [t_0,T].$
\end{definition}
\smallskip

\noindent  We show in the next two sections that the pathwise stochastic entropy solutions exist and satisfy a contraction in $L^1$ and, hence, are stable and unique.

\section{Some technical results}
\label{technical}
%
\noindent We discuss here the class of  test functions we use in the paper, recall the method of characteristics which provide a representation for the solutions to \eqref{rho} and \eqref{hatrho} and show what they imply for the special solutions we are considering, and, finally, state and prove a technical result. 

\subsection*{The test functions}  For each $\e>0$ we consider test functions solving solving \eqref{strongform} with initial data $\rho_\e^0$ that separates space, $x\in R^N$, and velocity, $\xi \in \R$.  A natural  choice is  convolution approximation to Dirac masses, that is 
\beq \label{init:conv}
\rho^0_\e (x-y, \xi - \eta)= \rho_\e^s  (x-y) \rho_\e^v ( \xi -\eta ), 
\eeq
where, as $\e \to 0$ and in the sense of distributions, 
\beq
 \rho_\e^s
 {\longrightarrow}  \delta(x-y)   \  \text{and}  \   \rho_\e^v  {\longrightarrow}   \delta(\xi -\eta); 
\eeq
here we use the superscripts $s$ and $v$ to signify whether the initial data approximates the space or the velocity variables. Moreover $\delta_a$ is the Dirac mass at $a$ and we write $\delta$ if $a=0$. Typical choices for $\rho_\e^s$ and $\rho_\e^v$ are 
$$\rho_\e^s(x)=\e^{-N}\rho^s(x/\e) \ \text{and } \  \rho_\e^v(\xi)=\e^{-1}\rho^v(\xi/\e),$$ 
for some smooth functions $\rho^s$ and  $\rho^v$  with compact support of diameter $1$ and such that 
$$0\leq \rho^s, \rho^v \leq 1, \  \rho^s(0)=\rho^v(0)=1  \ \text{and}  \int_{\R^N} \rho^s= \int_{\R} \rho^v =1.$$

\smallskip

\subsection*{The characteristics} Depending on the context we use the forward and backwards characteristics of \eqref{hatrho} to construct the smoothing kernels $\rho_\e$ and $\hat\rho_\e$, that is the solutions 
to \eqref{rho} and \eqref{hatrho} respectively. 
 
\smallskip

\noindent  Using   the forward characteristics
\beq \begin{cases}
\dot Y_{(y,\eta)} (s) =  {\bf a} (Y_{(y,\eta)}(s), \zeta_{(y,\eta)} (s)) , &\quad Y_{(y,\eta)}(0)= y,
\\[4mm]
\dot \zeta_{(y,\eta)} (s) = -b(Y_{(y,\eta)} (s),\zeta_{(y,\eta)} (s) ) ,         &\quad \zeta_{(y,\eta)} (0)= \eta, 
\end{cases}
\label{characF}
\eeq
we generate the  smoothing kernels $\wh \rho_\e$ and $\rho_\e$ 
and 
which satisfy, in the sense of distributions,
\beq
\wh \rho_\e(x,\xi, y, \eta,s)   \underset{\e \to 0}{\longrightarrow}  \delta (x-Y_{(y,\eta)}(s)) \delta (\xi- \zeta_{(y,\eta)}(s))
\label{x:limrhohat}
\eeq
and 
\beq
 \rho_\e(x,\xi, y, \eta,t)   \underset{\e \to 0}{\longrightarrow} \;  \delta (x-Y_{(y,\eta)}(W(t)-W(t_0))) \delta (\xi- \zeta_{(y,\eta)}(W(t)-W(t_0))).
\label{x:limrho}
\eeq

\smallskip

\noindent  We may also use the backward characteristics, that is the solution of 
\beq \begin{cases}
\dot X_{(t,x,\xi)} (s) = \  {\bf a} (X_{(t,x,\xi)}, \Xi_{(t,x,\xi)} ) , &\quad X_{(t,x,\xi)}(t)= x,
\\[4mm]
\dot \Xi_{(t,x,\xi)} (s) = -b(X_{(t,x,\xi)}, \Xi_{(s,x,\xi)}) ,         &\quad \Xi_{(t,x,\xi)}(t)= \xi;
\end{cases}
\label{characBack}
\eeq
then
$$
\wh \rho_\e(x,\xi,y, \eta , t) = \rho_\e^0 \big(X_{(t,x,\xi)}(0)-y, \Xi_{(t,x,\xi)} (0)-\eta \big)
$$
and
$$
\rho_\e(x,\xi,y, \eta , t; t_0) = \rho_\e^0 \big(X_{(W(t)-W(t_0),x,\xi)}(0)-y, \Xi_{(W(t)-W(t_0),x,\xi)} (0)-\eta \big)
$$

\noindent We note  that, in view of \eqref{as:b} and the uniqueness of the solutions to \eqref{characF} and \eqref{characBack}, 
\beq\label{takis}
 \zeta_{(y,0)}(t)=\Xi_{(t,x,0)}=0, \    \sgn( \zeta_{(y,\eta)}(t))=\sgn ( \eta )  \ \text { and } \  \sgn({ \Xi_{(t,x,\xi)}}(s))=\sgn (\xi ) .
\eeq

\noindent  To keep the notation simple in the sequel we write sometimes $(X_{(x,\xi)}, \Xi_{(x,\xi)})$ for \\
$(X_{(W(t)-W(t_0),x,\xi)}(0), \Xi_{(W(t)-W(t_0),x,\xi)} (0))$ and
$$
 \rho_\e(x,\xi, y, \eta,t)  =  \rho_\e^s  (X_{(x,\xi)}-y) \rho_\e^v ( \Xi_{(x,\xi)} -\eta ).
$$

\subsection*{Some  technical facts.}  We present now two technical facts which we will use in the next section. It concerns the behavior  of  
\beq\label{barq}
\bar q_\e (x,\xi, t;t_0):= \int_{\R^N} \int_\R q_\e(x,\xi,y,\eta, t;t_0)dy d\eta \ \text{ and } \ \p_\xi\bar q_\e (x,\xi, t;t_0),
\eeq
where, for each $y \in \R^N$ and $\eta \in \R$, $q_\e(x,\xi,y,\eta, t;t_0)$ is the solution of \eqref{rho} starting 
with initial datum
\beq\label{qq}
q_\e(x,\xi, y, \eta, t_0;t_0):= \rho_\e^s(x-y)  \rho_\e^v(-\eta) (- \frac{1}{2}  +  \int_{-\infty}^\xi \rho_\e^v(\bar \xi -\eta) d\bar\xi);
\eeq
it is, of course, immediate that $\bar q_(\cdot,\cdot,\cdot;t_0)$ is the solution of \eqref{rho} with initial datum
$$\bar q_\e(x,\xi, t_0; t_0)= - \frac{1}{2} + \int_\R \int_{-\infty}^\xi \rho_\e^v(\bar \xi -\eta) \rho_\e^v(-\eta) d\bar\xi d\eta. $$
\noindent The result is:
\begin{lemma}\label{delta100}
Assume \eqref{flux}, \eqref{path}, \eqref{as:b} and \eqref{assumptions1} and, for $t_0 \geq 0$ and $\e>0$, let  $\bar q_\e(\cdot, \cdot, \cdot;t_0)$  be given by \eqref{barq}. 
As $\e \to 0$,  for all times $t$ and a.e. in $(x,\xi),  \ \bar q_\e(x,\xi,t;t_0)  \to \f 12  {\rm sign} (\xi) $ and, thus, in $L_{\text{loc}}^p(\R^N\times \R)$ for all $p\in [1,\infty).$ \end{lemma}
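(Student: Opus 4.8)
The plan is to unwind the definition of $\bar q_\e$ by tracking the backward characteristics and exploiting the special structure of the initial datum \eqref{qq}. Observe first that the "space part" $\rho_\e^s(x-y)$ integrates (over $y\in\R^N$) to $1$, so that $\bar q_\e$ really only carries the velocity information; more precisely, using the representation $\rho_\e(x,\xi,y,\eta,t;t_0)=\rho_\e^s(X_{(x,\xi)}-y)\rho_\e^v(\Xi_{(x,\xi)}-\eta)$ along the backward characteristics of \eqref{characBack}, the $y$-integration is trivial and leaves
$$
\bar q_\e(x,\xi,t;t_0)=\int_\R \rho_\e^v(\Xi_{(x,\xi)}-\eta)\,\rho_\e^v(-\eta)\Big(-\tfrac12+\int_{-\infty}^{\Xi_{(x,\xi)}}\rho_\e^v(\bar\xi-\eta)\,d\bar\xi\Big)\,d\eta,
$$
where $\Xi_{(x,\xi)}=\Xi_{(W(t)-W(t_0),x,\xi)}(0)$ is the velocity component of the backward flow. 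Wait — one must be careful: the antiderivative factor in \eqref{qq} is evaluated at the \emph{initial} point of the characteristic, i.e. at $\Xi_{(x,\xi)}$, exactly as written, because $q_\e$ solves the transport equation \eqref{rho} and hence is constant along characteristics with the value of its datum at the foot of the characteristic.

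Next I would pass to the limit $\e\to 0$. The two mollifiers $\rho_\e^v(-\eta)$ concentrate $\eta$ near $0$, and $\rho_\e^v(\Xi_{(x,\xi)}-\eta)$ then forces the contribution to vanish unless $\Xi_{(x,\xi)}$ is itself within $O(\e)$ of $0$; for $\Xi_{(x,\xi)}\neq 0$ the whole integrand is eventually zero. By \eqref{takis}, $\sgn(\Xi_{(t,x,\xi)}(s))=\sgn(\xi)$ and $\Xi_{(t,x,0)}=0$, so $\Xi_{(x,\xi)}=0$ precisely on the measure-zero set $\{\xi=0\}$, which we may ignore. Hence for a.e.\ $(x,\xi)$ with $\xi\neq 0$ we are evaluating the inner antiderivative $\int_{-\infty}^{\Xi_{(x,\xi)}}\rho_\e^v(\bar\xi-\eta)\,d\bar\xi$ at a point $\Xi_{(x,\xi)}$ whose sign is fixed and which stays bounded away from the $O(\e)$-neighbourhood of $0$ where $\eta$ lives: this antiderivative tends to $1$ if $\xi>0$ and to $0$ if $\xi<0$. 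Together with $\int_\R\rho_\e^v(-\eta)\,d\eta=1$ and the fact that $\int_\R\rho_\e^v(\Xi_{(x,\xi)}-\eta)\rho_\e^v(-\eta)\,d\eta\to 1$ when $\Xi_{(x,\xi)}$ and $\eta$ are slaved to the same point (again using that the relevant set of $\eta$ shrinks to $0$ while $\Xi_{(x,\xi)}$ is, for fixed $(x,\xi)$, a fixed number whose neighbourhood of radius $O(\e)$ does not see the $O(\e)$-blob at $0$ once $\xi\neq0$), a careful bookkeeping gives the pointwise limit $-\tfrac12+\ind{\xi>0}=\tfrac12\sgn(\xi)$.

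The last step is the upgrade from a.e.\ pointwise convergence to $L^p_{\mathrm{loc}}$ convergence for $p\in[1,\infty)$: this is immediate from the dominated convergence theorem once one notes that $|\bar q_\e|\le 1$ uniformly (since $0\le\rho_\e^v\le \e^{-1}$, $\int\rho_\e^v=1$, and the antiderivative factor lies in $[-\tfrac12,\tfrac12]$, so each $q_\e$-integral is bounded, and the $\eta$-integral of the two mollifiers is $\le 1$).

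The main obstacle I expect is the delicate double-scaling in $\eta$ and $\Xi_{(x,\xi)}$: one has to argue cleanly that, although both $\rho_\e^v(-\eta)$ and $\rho_\e^v(\Xi_{(x,\xi)}-\eta)$ are needed to get the correct normalization $1$ in the prefactor, the antiderivative factor $-\tfrac12+\int_{-\infty}^{\Xi_{(x,\xi)}}\rho_\e^v(\bar\xi-\eta)\,d\bar\xi$ may be evaluated "at $\xi$ frozen," because for a.e.\ $\xi$ the quantity $\Xi_{(x,\xi)}$ is a fixed nonzero number and the $O(\e)$-support of $\eta$ does not straddle it. Making this rigorous amounts to splitting the $\eta$-integral and using continuity of $\Xi_{(x,\xi)}$ in its arguments together with \eqref{takis}; once that splitting is set up, the rest is routine.
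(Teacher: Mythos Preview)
Your overall strategy---transport the datum back along the characteristics, use \eqref{takis} to convert $\sgn(\Xi_{(x,\xi)})$ into $\sgn(\xi)$, then invoke dominated convergence---is exactly the paper's, but the central formula you write for $\bar q_\e$ is wrong, and the error propagates into a self-contradiction.

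The spurious factor is $\rho_\e^v(\Xi_{(x,\xi)}-\eta)$. You seem to have built $q_\e$ as ``$\rho_\e$ times the antiderivative factor,'' borrowing the representation $\rho_\e(x,\xi,y,\eta,t;t_0)=\rho_\e^s(X_{(x,\xi)}-y)\rho_\e^v(\Xi_{(x,\xi)}-\eta)$. But $q_\e$ is \emph{not} defined as $\rho_\e$ multiplied by anything; it is the solution of \eqref{rho} with the datum \eqref{qq}, and that datum contains no factor $\rho_\e^v(\xi-\eta)$. Transporting \eqref{qq} along the backward characteristics gives
\[
q_\e(x,\xi,y,\eta,t;t_0)=\rho_\e^s\big(X_{(x,\xi)}-y\big)\,\rho_\e^v(-\eta)\Big(-\tfrac12+\int_{-\infty}^{\Xi_{(x,\xi)}}\rho_\e^v(\bar\xi-\eta)\,d\bar\xi\Big),
\]
so after integrating in $y$ and $\eta$,
\[
\bar q_\e(x,\xi,t;t_0)=-\tfrac12+\int_\R\!\int_{-\infty}^{\Xi_{(x,\xi)}}\rho_\e^v(\bar\xi-\eta)\,\rho_\e^v(-\eta)\,d\bar\xi\,d\eta .
\]
With your extra factor the integral would indeed be $0$ for every $\xi\neq 0$ (you say so yourself: ``for $\Xi_{(x,\xi)}\neq 0$ the whole integrand is eventually zero''), which contradicts the limit $\tfrac12\sgn(\xi)$ you are aiming for. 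The claim that $\int_\R\rho_\e^v(\Xi_{(x,\xi)}-\eta)\rho_\e^v(-\eta)\,d\eta\to 1$ is false for fixed $\Xi_{(x,\xi)}\neq 0$: the two bumps are disjoint for small $\e$ and the integral is identically $0$.

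Once the correct formula is in place, the ``delicate double-scaling'' you worry about disappears: there is a single mollifier $\rho_\e^v(-\eta)$ concentrating $\eta$ at $0$, and for fixed $\xi\neq 0$ and $|\eta|\le\e$ the inner antiderivative equals $\mathbf 1_{\{\Xi_{(x,\xi)}>0\}}$ once $\e$ is small, so the limit is $-\tfrac12+\mathbf 1_{\{\Xi_{(x,\xi)}>0\}}=-\tfrac12+\mathbf 1_{\{\xi>0\}}$ by \eqref{takis}. Your dominated-convergence step for $L^p_{\mathrm{loc}}$ is fine (indeed $|\bar q_\e|\le\tfrac12$). With this correction your argument coincides with the paper's proof.
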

\begin{proof}
In view of the relationship between the solutions to \eqref{rho} and \eqref{hatrho}, it suffices to prove the claim for the solution
$\hat q_\e$ of \eqref{hatrho}  with $\hat q_\e (x,\xi, y,\eta, 0)$ as in \eqref{qq}.
\smallskip

\noindent 
In light of the discussion in the previous paragraph, for each fixed $t$,  we have 
$$ \begin{array}{rl}
\dis \int_{\R^{N}}\int_{\R} \hat q_\e (x,\xi, y,\eta, t)  dy d \eta & = \dis \int_{\R^{N}}\int_{\R} \hat q_\e (X_{(t,x,\xi)} (0), \Xi_{(t,x,\xi)} (0), y, \eta, 0) dy d \eta
\\ \\
&= - \frac{1}{2} +   \dis \int_{\R} 
 \int_{-\infty}^{ \Xi_{(t,x,\xi)} (0)}  \rho_\e^v(\bar \xi -\eta) \rho_\e^v(-\eta) d\bar\xi d\eta
\\ \\
& \underset{ \e \to 0 }{\longrightarrow} - \frac{1}{2} + \ind{\Xi_{(t,x,\xi)} (0) >0} = - \frac{1}{2} + \ind{\xi >0},
\end{array}
$$
the last equality being  a consequence of  \eqref{takis}. 
\end{proof}

\smallskip

\noindent As far as $\p_\xi\bar q_\e (x,\xi, t;t_0)$  is concerned we first observe that
$$\begin{array}{rl}
\p_\xi \hat q_\e(x,\xi,y,\eta, t;t_0)= &  -\rho_\e^v(-\eta)  D_y \rho_\e^s(X_{(t,x,\xi)}(0)-y) \big[ - \frac{1}{2}  +  \int_{-\infty}^{\Xi_{(t,x,\xi)}(0)}\rho_\e^v(\bar \xi -\eta) d\bar\xi \, \big] \p_\xi X_{(t,x,\xi)}(0)
\\[5pt]
& + \rho_\e^v(-\eta)  \rho_\e^s(X_{(t,x,\xi)}(0)-y) \rho_\e^v(\Xi_{(t,x,\xi)}(0)-\eta) \p_\xi \Xi_{(t,x,\xi)}(0).
\end{array}$$ 

\noindent Integrating in $y$ and $\eta$ we find
\beq\label{takis1}
\p_\xi\bar q_\e (x,\xi, t;t_0)= \int_{\R^N}\int_\R  \rho_\e^v(-\eta) \rho_\e(x,\xi,y,\eta,t;t_0) d\eta dy {\p_\xi\Xi_{(W(t)-W(t_0),x,\xi)}(0)}.
\eeq 

\section{The stability and uniqueness of pathwise stochastic entropy solutions} 
\label{sec:proof}

\subsection*{The result} The  result  about the $L^1$-contraction property and, hence, the intrinsic uniqueness of the pathwise stochastic entropy solutions, is stated next.

\begin{theorem} \label{th:xsolution}Assume \eqref{flux}, \eqref{path}, \eqref{as:b}, \eqref{assumptions1} and $u^0  \in (L^1\cap L^\infty)(\R^N)$ and fix $T>0$.  There exists at most one  pathwise stochastic entropy solution $u \in L^\infty \big( (0,T); L^1 \cap L^\infty(\R^N) \big)$  to \eqref{xscl} which is continuous at $t=0$ with values in $L^1(\R^N).$
In addition any two pathwise stochastic entropy solutions  $u_1, u_2$ 
satisfy, for almost all $t>0$, the ``contraction'' property
\beq\label{cont}
\| u_2(\cdot,t) -u_1(\cdot,t) \|_{1} \leq \| u_2^0-u_1^0 \|_{1}.
\eeq
\end{theorem}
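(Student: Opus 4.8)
The plan is to run a doubling-of-variables argument at the kinetic level, using the ``convolution along characteristics'' $\rho \star \chi$ as the correct replacement for the usual mollification, exactly as in \cite{LPSscl} but now coping with the fact that the characteristics \eqref{characF}–\eqref{characBack} are no longer explicit. First I would take two pathwise stochastic entropy solutions $u_1,u_2$ with kinetic functions $\chi_1,\chi_2$ and defect measures $m_1,m_2$, and write down \eqref{strongform1} for each, using for $\chi_1$ a test kernel built from the forward characteristics and for $\chi_2$ one built from the backward characteristics (with initial data the product kernels $\rho_\e^s\rho_\e^v$ of Section~\ref{technical}), so that the same moving point $(Y_{(y,\eta)},\zeta_{(y,\eta)})$ appears in both. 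Multiplying the two identities and integrating in the frozen variables $(y,\eta)$ produces, in the limit $\e\to 0$, a quantity of the form $\int \chi_1(x,\xi,t)\chi_2(x,\xi,t)\,\psi\,dx\,d\xi$ plus the initial term $\int \chi_1^0\chi_2^0\,dx\,d\xi$ plus two \emph{sign-definite} measure contributions $-\int\!\!\int \partial_\xi(\cdots)\,m_i$. The key algebraic identity is the pointwise one $|u_1-u_2| = \int_{\R}|\chi_1-\chi_2|\,d\xi = \int_\R (|\chi_1|+|\chi_2| - 2\,\mathrm{sgn}(\xi)(\chi_1-\chi_2)\cdots)$; more precisely one uses $\int_\R \chi_1\chi_2\,d\xi = \min(u_1^+,u_2^+)+\min(u_1^-,u_2^-)$ and $\int_\R \chi_i^2\,d\xi = |u_i|$, so that $\|u_1-u_2\|_1 = \|u_1\|_1+\|u_2\|_1 - 2\int\!\!\int\chi_1\chi_2$, which converts a \emph{lower} bound on $\int\!\!\int\chi_1\chi_2$ into the contraction \eqref{cont} once one also controls the time evolution of $\int\chi_i^2$ — and that control is the content of the mass bound in \eqref{measure}, since $\tfrac{d}{dt}\int \chi_i^2\,d\xi\,dx$ is governed by $\int \partial_\xi(\mathrm{sgn}\,\xi)\,m_i$ type terms.

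The heart of the argument is showing that the two ``bad'' first-order terms — the ones carrying $\circ\,dW$, i.e.\ $\dv_x[{\bf a}\chi]$ and $\dv_\xi[b\chi]$ — cancel in the product. This is precisely why one convolves along characteristics: by construction each $\rho\star\chi_i$ satisfies the transport-free identity \eqref{strongform_reg}, in which all the $dW$ terms have already been absorbed into the flow of $(Y,\zeta)$. So after writing \eqref{strongform1} for both solutions against kernels sharing the same characteristic trajectory, the product rule leaves only the two measure terms and no $dW$ term survives. The new difficulty relative to \cite{LPSscl} is that one can no longer compute $Y,\zeta$ (hence the Jacobians $\partial_\xi X_{(\cdots)}(0)$, $\partial_\xi \Xi_{(\cdots)}(0)$ appearing in \eqref{takis1}) explicitly; instead one must pass to the limit $\e\to 0$ using only (i) the sign information \eqref{takis}, which guarantees $\mathrm{sgn}(\Xi)=\mathrm{sgn}(\xi)$ so that the limiting weight is the correct $\tfrac12\mathrm{sgn}(\xi)$ as established in Lemma~\ref{delta100}, and (ii) the boundedness of the first derivatives of ${\bf a},b$ from \eqref{assumptions1}, which by Gr\"onwall gives uniform-in-$\e$ bounds on the characteristic flow and its first $\xi$-derivative, enough to justify the limits in the $L^p_{\mathrm{loc}}$ sense of Lemma~\ref{delta100}.

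Concretely the steps are: (1) fix the two kernels and write \eqref{strongform1} for $i=1,2$; (2) differentiate the product $(\rho\star\chi_1)(\rho\star\chi_2)$ in $t$, obtaining an identity with only the two measure terms on the right; (3) integrate over $(y,\eta)\in\R^N\times\R$ and let $\e\to 0$, invoking Lemma~\ref{delta100} and \eqref{takis1} to identify the limiting integrands, and crucially checking that $\partial_\xi \bar q_\e \to \delta(\xi)$ up to a bounded Jacobian factor so that the measure terms have the right sign near $\xi=0$ and vanish away from it by the support property in \eqref{measure}; (4) combine with the elementary $\chi$-algebra above and the $L^\infty((0,T);L^2)$ bound (which follows from $u^0\in L^1\cap L^\infty$ and \eqref{measure}) to deduce $\|u_2(\cdot,t)-u_1(\cdot,t)\|_1 \le \|u_2^0-u_1^0\|_1$ for a.e.\ $t$; (5) uniqueness is then immediate by taking $u_1=u_2=u^0$. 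I expect step (3) — controlling the $\e\to 0$ limit of the measure terms and of the cross term without explicit characteristics, in particular showing that the residual first-order ``error'' terms generated by the mismatch between the frozen and true trajectories vanish — to be the main obstacle; this is where the $L^\infty$ bounds on $\partial a_i,\partial b$ in \eqref{assumptions1} and the careful choice of product kernels \eqref{init:conv} must be used most delicately.
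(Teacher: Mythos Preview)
Your overall strategy is close to the paper's, but there is a genuine structural gap: you are missing the \emph{time-localization and iteration} that is essential here. You expect the ``residual error terms generated by the mismatch between the frozen and true trajectories'' to vanish as $\e\to 0$. They do not. The paper shows that these error terms satisfy
\[
A(h,\e)\ \le\ C\,\omega(h),\qquad \omega(h):=\sup_{0\le s\le h,\ 0\le t\le T}|W(t+s)-W(t)|,
\]
\emph{uniformly in $\e$}, where $h$ is the length of the time interval over which the kernel $\rho_\e(\cdot;t_0)$ has been transported. The mechanism is that the mismatch $\p_\xi\rho_\e(x,\xi,\cdot)\rho_\e(x',\xi',\cdot)+\rho_\e(x,\xi,\cdot)\p_{\xi'}\rho_\e(x',\xi',\cdot)$ produces a factor $|\p_\xi X-\p_\xi X'|$ against a kernel of size $C/\e$; a cancellation estimate for the characteristics (from \cite{LSbook}) gives $|\p_\xi X(s)-\p_\xi X'(s)|\le Cs\e$ when the initial points are $\e$-close, the $\e$'s cancel, and one is left with $C|W(t)-W(t_0)|\le C\omega(h)$. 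On a fixed interval $[0,T]$ this is $O(\omega(T))=O(1)$ and no $\e\to 0$ limit helps.

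The paper therefore partitions $[s,t]$ into subintervals $[t_i,t_{i+1}]$ of length $h$, \emph{restarts} the kernels at each $t_i$ (this is why the functional is $F_{t_0,\e}$ and the test functions $\rho_\e(\cdot;t_0)$, $q_\e(\cdot;t_0)$ carry a base time), and proves
\[
F_{t_i,\e}(t_{i+1})-F_{t_i,\e}(t_i)\ \le\ C\,\omega(h)\int_{t_i}^{t_{i+1}}\!\!\int (m^{(1)}+m^{(2)})\,dx\,d\xi\,dt.
\]
After $\e\to 0$ and summation, $F(t)-F(s)\le C\omega(h)\int_s^t\!\int(m^{(1)}+m^{(2)})$, and only then does one send $h\to 0$, using the path-independent total-mass bound in \eqref{measure}. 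Your scheme with a single global-in-time kernel and a single $\e\to 0$ limit cannot close. A secondary point: the sign of the measure terms is not obtained from the support condition in \eqref{measure} but from $\p_{\xi'}\chi^{(2)}=\delta(\xi')-\delta_{u_2(x',t)}(\xi')\le\delta(\xi')$; the surviving $\delta(\xi')$ piece is then cancelled \emph{exactly} by the time derivative of the $q_\e\star\chi^{(i)}$ terms via \eqref{takis1}, which is the reason $q_\e$ (not just $\rho_\e$) enters the regularized functional $F_{t_0,\e}$.
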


\subsection*{The proof} When $W$ is smooth, the proof of \eqref{cont} is based on 
considering, for two solutions $u_1$ and $u_2$ and $\chi^{(i)} ( x,\xi,t)=\chi(u_i( x,t); \xi)$,  the function
$$
F(t) :=  Ê\int \big[ \; |\chi^{(1)} ( x,\xi,t)| + | \chi^{(2)} ( x,\xi,t)| -2 \chi^{(1)} ( x,\xi,t)\chi^{(2)} ( x,\xi,t) \big] dx d\xi = \| u_1 (t)- u_2 (t) \|_1
$$
and  showing that $dF/dt \leq 0$, which yields the contraction property 
$$
F(t_2) \leq F(t_1)  \ \text{for almost all } \ 0\leq t_1 \leq t_2.
$$

\noindent This is, however, not possible for the paths we are considering in this note. It is, therefore, necessary  to modify the integrand in $F(t)$  in order to make use of the defining properties of stochastic entropy solutions. In particular we need to replace $ |\chi(u_i(x,t),\xi)|=\text{sign}(\xi)\;   \chi(u_i(x,t),\xi)$ by the ``convolution'' along characteristics \eqref{char_convol} for an appropriate choice of $\rho$, which is going to be the $q_\e(\cdot,\cdot,\cdot;t_0)$ in  \eqref{barq}.
We also need to localize in time and use an iteration. The reason is that the approximation creates errors which can be controlled by the oscillations of the path $W.$ Thus we need to 
discretize in time and, hence, to use $\rho\e$'s and  $q_\e$'s starting at different times and to add the errors. 

\noindent We present now the

\begin{proof}[The proof of Theorem \ref {th:xsolution}] The uniqueness follows immediately from the contraction property, hence, here we concentrate on the latter. Since the  proof is long and technical we divide it in several steps some of which, although formally true, require justification. 
\smallskip

\noindent {\it The general set up.} Fix $T>0$. We begin with a regularization of the functional $F$. For  $t_0\geq 0$, $\e>0$ small,  for $t\geq t_0$ and $q_\e$ as in Lemma \ref{delta100}, let
$$ \begin{array}{rl}
F_{t_0,\e}(t) := Ê\dis \int \big[& q_\e (\cdot,t;t_0)\star \chi^{(1)} ( y, \eta,t) + q_\e (\cdot,t;t_0)\star \chi^{(2)} ( y, \eta,t)
\\[10pt]
& - 2  \rho_\e(\cdot,t;t_0) \star\chi^{(1)} ( y, \eta,t)\;  \rho_\e(\cdot,t;t_0) \star \chi^{(2)} ( y, \eta,t) \big] dy d \eta.
\end{array}$$

\noindent It follows from the a.e. continuity in time of the $F$ as well as  \eqref{x:limrho} 
that,  for almost every  $t \in [0,T]$ as $\e\to 0$,
\beq\label{app}
F_{t_0,\e}(t) \rightarrow F(t).
\eeq 
\noindent  For $h>0$, let $\omega (h)$ denote the oscillation of the path $W$ over time intervals of size $h$, that is 
\beq
  \omega(h ): = \sup_{0 \leq s \leq h, \ 0\leq t \leq T }  |W(t+s) - W(t)|.
\label{eq:omega}
\eeq

\noindent Fix $s,t$ so that $0\leq s\leq t\leq T$ and let $\Delta=\{s=t_0 \leq t_1 \ \leq \ldots \leq t_M=t\}$ be a partition of $[s,t]$ with mesh size $ h= t_{i+1}-t_i$ and such that \eqref{app} holds for all $t_i  \in \Delta$.  The conclusion follows if we show that there exists $C>0$ such that 
\beq\label{enough}
F_{t_i,\e}(t_{i+1}) - F_{t_i,\e}(t_i) \leq  
C \omega(h) \int_{t_i}^{t_{i+1}} \int_{\R^N} \int_\R [m^{(1)}(x,\xi,t) + m^{(2)}(x,\xi,t)]d\xi dx dt.
\eeq

\smallskip

\noindent Indeed if \eqref{enough} holds, then 
$$\begin{array}{rl} 
F(t) - F(s) & =  \dis \sum_{i=0}^{M-1}  [F(t_{i+1} -F(t_i)]= \lim_{\e\to 0} \dis \sum_{i=0}^{M-1} [F_{t_i,\e}(t_{i+1}) - F_{t_i,\e}(t_i)]
\\[10pt]
& \leq C \omega (h) \dis \int_{s}^{t} \dis \int_{\R^N} \int_\R [m^{(1)}(x,\xi,t) + m^{(2)}(x,\xi,t)]d\xi dx dt.
\end{array}$$ 
\noindent In view of \eqref{measure}, to prove \eqref{enough}  it suffices to show that,  for almost every  $t_0 \geq 0$ and  $h, \e >0$, there exists $C>0$ such that almost everywhere  in  $(t_0, t_0 +h)$ 
$$
\frac{dF_{t_0,\e}} {d t} \leq C\omega (h)  \int_{\R^N} \int_\R m(x,\xi,t) d\xi dx ;
$$ 
notice that this can be written only for  continuity times of the right hand side, which include $t_0=0$ and this is enough for our purpose.
\smallskip 

\noindent  Since the upper bound for  $dF_{t_0,\e}/dt $ requires some long and tedious calculations, we divide the computations and estimates in several parts. In the first, which is the longest, we estimate  the derivative of the product term. The second is about about the single terms. After grouping everything together we find the error term. In what follows, to keep the notation simple,  we assume that $t_0=0$, write $F_\e$ instead of $F_{0,\e}$, we omit the dependence of the solutions of \eqref{rho} and \eqref{hatrho} on the initial time and we write $X_{(x,\xi)}, \ \Xi_{(x,\xi)},  \ X_{(x',\xi')}$ and  $\Xi_{(x',\xi')}$ in place of $X_{(W(t),x,\xi)}(0), \ \Xi_{(W(t), x,\xi)}(0),  \ X_{(W(t), x',\xi')}(0)$ and  $\Xi_{(W(t), x',\xi')}(0).$
\smallskip

\noindent {\it The product term in $\f{d}{dt} F_\e$.}  It follows from the definitions that   \beq\label{product11}
- \f{d}{dt} [ \rho_\e \star \chi^{(1)} ( y, \eta,t)  \rho_\e \star \chi^{(2)} ( y, \eta,t)] = I_1(t) + I_2(t),
\eeq
where 
\beq\label{prod1}
\begin{cases}
I_1(t) := \int \p_\xi  \rho_\e (x,\xi, y, \eta , t) m^{(1)}(x,\xi,t) \rho_\e (x',\xi' , y, \eta , t) \chi^{(2)}(x',\xi',t) dx d\xi dx'd\xi', \\[2.5mm]
\text{and}\\[1.5mm]
I_2(t):= \int \rho_\e (x,\xi, y, \eta , t) \chi^{(1)}(x,\xi,t)  \p_\xi  \rho_\e (x',\xi', y, \eta , t) m^{(2)}(x',\xi',t) dx d\xi dx'd\xi'  .
\end{cases}
\eeq

\noindent  We focus next on $I_1(t)$ since $I_2(t)$ 
is handled similarly. 
\smallskip

\noindent  The classical proof of uniqueness for smooth paths uses $\rho^s(x-y)\rho^s(x'-y)\rho^v(\xi -\eta)\rho^v(\xi'-\eta)$ in place  of  $\rho(x,\xi,y,\eta,t) \rho_\e (x',\xi' , y, \eta , t).$ As a result,  instead  of $\p _{\xi} \rho(x,\xi,y,\eta,t) \rho_\e (x',\xi' , y, \eta , t) $,  the integrand has    $\p_{\xi} \rho_\e^v(\xi - \eta) \rho_\e^v(\xi' - \eta )$,  and, hence, it is easy to interchange $\xi$ and $\xi'$ derivatives to obtain cancellations. This is, however, not the case here and we need to introduce the appropriate derivatives, a fact that gives rise to additional terms and error terms that need to be approximated.  
\smallskip

\noindent  After integrating with respect to  $y$ and $\eta$ and integrating by parts, we find 
$$
I_1(t)=I_{11}(t) + I_{12}(t),
$$
with
$$
I_{11}(t):= - \int  m^{(1)} (x,\xi,t)   \chi^{(2)} (x',\xi',t) \rho_\e(x,\xi , y , \eta,t)  {\p_ {\xi'}} \rho_\e(x',\xi',y, \eta,t) dy  d\eta dx d\xi dx' d\xi',
$$
and
$$\begin{array}{rl}
I_{12}(t):=  \dis \int  m^{(1)}(x,\xi,y,\eta,t) \chi^{(2)}(x',\xi',y,\eta,t) & \big[   \p_\xi \rho_\e(x,\xi ,y , \eta,t)  \rho_\e(x',\xi',y, \eta,t)
\\[5pt]
& +  \rho_\e(x,\xi ,y , \eta,t)  \p_{\xi'} \rho_\e(x',\xi',y, \eta) \big] dy  d\eta dx d\xi dx' d\xi'.
\end{array}$$
\smallskip

\noindent  Integrating by parts in $I_{11}(t)$ and using that $\p_{\xi'} \chi^{(2)}(x',\xi',t)=\delta (\xi') - \delta_{u^2(x',t)}(\xi') \leq \delta (\xi')$, which is an immediate
consequence of the definition of $\chi^{(2)}$ and the positivity, in the sense of distributions, of the Dirac mass, we find 
$$ 
I_{11}(t) = \int  m^{(1)} (x,\xi,t) \p _{\xi'}  \chi^{(2)} (x',\xi',t) \rho_\e(x,\xi , y , \eta,t) \rho_\e(x',\xi',y, \eta,t) dy  d\eta dx d\xi dx' d\xi' \leq I_{13}(t)$$

\noindent with
$$ I_{13}(t):= \int  m^{(1)} (x,\xi,t) \rho_\e(x,\xi , y , \eta,t)  [ \int \rho_\e(x',\xi',y, \eta,t) \delta(\xi') d\xi' dx'] dy d\eta dx d\xi.$$

\noindent Next we use that,  in light of $\delta(\xi')= \f 12 \p_{\xi'} \sgn (\xi')$,
 
$$\int \rho_\e(x',\xi',y, \eta,t) \delta(\xi') d\xi'dx' =  - \f 1 2 \int  \p_{\xi'} \rho_\e(x',\xi',y, \eta,t) \sgn (\xi') d\xi'dx,$$
\smallskip

\noindent  and rewrite $I_{13}(t)$ as 
$$I_{13}(t) = I_{14}(t) + I_{15}(t),$$

\noindent  with 

$$ I_{14}(t) := \f 12  \int  m^{(1)} (x,\xi,t)  \p_\xi \rho_\e(x,\xi , y , \eta,t)[ \int  \rho_\e(x',\xi',y, \eta,t) \sgn (\xi') d\xi'dx'] dy d\eta dx d\xi, $$

\noindent and
$$\begin{array}{rl}
I_{15}(t) :=  -  \f 12 \dis \int m^{(1)} (x,\xi,t) \sgn (\xi') \big[& \rho_\e(x',\xi',y, \eta,t) \p_\xi \rho_\e(x,\xi , y , \eta,t)
\\[5pt]
&+ \rho_\e(x,\xi , y , \eta,t) \p_{\xi'} \rho_\e(x',\xi',y, \eta,t) \big] dx' d\xi ' dy d\eta dx d\xi.
\end{array}$$
\smallskip

\noindent Since  $\rho_\e (x,\xi,y,\eta,t)= \rho_\e^s(X_{(x,\xi)} -y)   \rho_\e^v(\Xi_{(x,\xi)} -\eta),$  
we find

$$
\p_\xi \rho_\e(x,\xi , y , \eta,t) = -D_y \rho_\e^s(X_{(x,\xi)} -y)   \rho_\e^v(\Xi_{(x,\xi)} -\eta) \p_\xi X_{(x,\xi)} -  \rho_\e^s(X_{(x,\xi)} -y)  D_\eta \rho_\e^v(\Xi_{(x,\xi)} -\eta) \p_\xi \Xi_{(x,\xi)}.
$$
\smallskip

\noindent We continue with two key observations. In light of  \eqref{takis} and the facts  that
$$
\rho_\e (x',\xi',y,\eta,t)= \rho_\e^s(X_{(x',\xi')} -y) \rho_\e^v(\Xi_{(x',\xi')} -\eta),  \  \int \rho_\e^s(x'-y)dx'=1 \ \text{and} \  dX_{(x',\xi')} d\Xi_{(x',\xi')}=dx'd\xi',
$$
the last being a consequence of the Hamiltonian structure of the system of characteristics, the first observation is that 
$$\begin{array}{rl}
\int \rho_\e(x',\xi',y, \eta,t) \sgn (\xi') d\xi'dx' & = \dis \int  \rho_\e(x',\xi',y, \eta,t) \sgn (\Xi_{(x',\xi')}) d\xi'dx'
\\[5pt]
& = \dis \int \rho_\e(x',\xi',y, \eta,0) \sgn{\xi'} d\xi'dx'=  \dis \int \sgn{\xi'}\rho_\e^v(\xi' -\eta) d\xi'.  
\end{array}$$

\smallskip

\noindent The second is 

$$ D_\eta \int_\R \sgn(\xi') \rho_\e^v(\xi' -\eta)d \xi' =  2 \rho_\e^v(-\eta) \ \text{and } \  \int_{\R^N} D_y \rho_\e^s(X_{x,\xi} -y) dy=0.$$

\noindent Using the above in the expression for $I_{14}(t)$, after integrating by parts, we find

$$I_{14}(t)= - \int  m^{(1)} (x,\xi,t) \rho^v_\e(-\eta) \rho_\e(x,\xi,y,\eta,t) \p_\xi \Xi_{(x,\xi)}. $$ 

\smallskip

\noindent Repeating this calculation for $I_2(t)$
adding the two inequalities,  we get
\beq\label{takis3}
\begin{array}{rl}
- \f{d}{dt}& [ \rho_\e \star \chi^{(1)} ( y, \eta,t)\;  \rho_\e \star \chi^{(2)} ( y, \eta,t) ]
\\ \\ &\leq 
 \int  [m^{(1)} (x,\xi,t) +  m^{(2)} (x,\xi,t)] \rho_\e^v(-\eta) \rho_\e( x,\xi , y , \eta,t)  
 \p_\xi \Xi_{(x,\xi)} dy d\eta dx d\xi 
 \\[2mm]
 & \quad + {\rm Err^{(1)}_\e}(t) + {\rm Err^{(2)}_\e}(t),
\end{array}
\eeq
where
\beq\label{error1}
\begin{array}{rl}
{\rm Err^{(1)}_\e}(t) 
&=\int  [m^{(1)} (x,\xi,t)  \chi^{(2)} (x',\xi',t) + m^{(2)} (x',\xi',t)  \chi^{(1)} (x,\xi,t)]\\[2mm]
& [  \p_\xi \rho_\e(x,\xi ,y , \eta,t)  \rho_\e(x',\xi',y, \eta,t) +  \rho_\e(x,\xi ,y , \eta,t)  \p_{\xi'} \rho_\e(x',\xi',y, \eta)] dy  d\eta dx d\xi dx' d\xi'.
\end{array}
\eeq
and
\beq\label{error2}
\begin{array}{rl}
{\rm Err^{(2)}_\e}(t) &:=  
\int  [m^{(1)} (x,\xi,t)  \chi^{(2)} (x',\xi',t) \sgn(\xi') + m^{(2)} (x',\xi',t)  \chi^{(1)} (x,\xi,t)\sgn(\xi)]\\[2mm]
&[  \p_\xi \rho_\e(x,\xi ,y , \eta,t)  \rho_\e(x',\xi',y, \eta,t) +  \rho_\e(x,\xi ,y , \eta,t)  \p_{\xi'} \rho_\e(x',\xi',y, \eta)] dy  d\eta dx d\xi dx' d\xi'.
\end{array}
\eeq
\smallskip

\noindent {\it The other  terms in $\f{d}{dt} F_\e$.}  
Since, for each $y$ and $\eta$,  $q_\e(x,\xi,y,\eta,t)$ solves \eqref{rho}, for $i=1,2$, we have 
$$
 \f{d}{dt}  q_\e \star \chi^{(i)} ( y, \eta,t) = -  \int \p_\xi  q_\e  (x,\xi, y, \eta , t) m^{(i)}(x,\xi,t) dx d\xi,
$$

\noindent  and, in view of \eqref{takis1},

$$
 \f{d}{dt}  q_\e \star \chi^{(i)} ( y, \eta,t)= \int m^{(i)}(x,\xi,t) \rho_\e^v(-\eta) \rho_\e(x,\xi,y,\eta,t;t_0)  \p_\xi \Xi_{(x,\xi)} d\eta dy dx d\xi.
$$ 
\smallskip

\noindent {\it The upper bound  $\f{d}{dt} F_\e$.} Combining the previous two steps we conclude that, for almost every $ t\in [0,h]$, 
\beq\label{Error}
\f{d}{dt} F_\e(t) \leq   {\rm Err^{(1)}_\e}(t) +  {\rm Err^{(2)}_\e}(t). 
\eeq

\smallskip

\noindent {\it The estimate of the error terms.} It is immediate that, 
for $0\leq \tau \leq h$ and $i=1, 2$,
$$
\int_0^\tau | {\rm Err^{(i)}_\e}(t)  | dt \leq A(h, \e) \int_0^\tau  \int  [m^{(1)} (x,\xi,t) + m^{(2)} (x,\xi,t)]dx d\xi dt,  
$$
where 
$$
A(h, \e) := \sup_{0\leq t\leq h, x\in \R^N, \xi \in \R} B_\e(x, \xi, t)
$$
and
$$
B_\e  :=     \int \left| \int [ \p_ \xi \rho_\e(x,\xi ,y , \eta,t)  \rho_\e(x',\xi',y, \eta,t) +  \rho_\e(x,\xi ,y , \eta,t) \p_\xi'  \rho_\e(x',\xi',y, \eta)] \;   dy \; d\eta \right| dx' d\xi' ,
$$ 

\smallskip

\noindent We show that there exists a uniform $C>0$ such that 
\beq
A^{(i)}(h, \e)  \leq C \omega(h).
\label{eq:Ate}
\eeq 

\smallskip

\smallskip

\noindent {\it The estimate of $B_\e$.}
To prove \eqref{eq:Ate} for $A_\e$ we observe that, in light of  the properties of $\rho_\e^s$ and  $\rho_\e^v$, 
$$
\int \big[   \p_\xi \rho_\e(x,y,\xi, \eta,t)  \rho_\e(x',\xi',y, \eta) +  \rho_\e(x,y,\xi, \eta,t)  \p_ \xi'  \rho_\e(x',\xi',y, \eta) \big] dy d\eta
$$
$$
= \int [ \p_\xi X \cdot D \rho^s_\e(X-y)  \rho^v_\e(\Xi - \eta)  \rho^s_\e(X'-y )  \rho^s_\e( \Xi' - \eta) +   \rho^s_\e(X-y )  \rho^s_\e( \Xi - \eta) {\p_ \xi X'} \cdot D \rho^s_\e(X'-y)  \rho^v_\e(\Xi' - \eta)  ] dy d\eta
$$
$$
+ \int  [ \rho^s_\e(X-y)  \p_\xi \Xi  \cdot D \rho^v_\e(\Xi - \eta)  \rho^s_\e(X'-y )  \rho^s_\e( \Xi' - \eta) + \rho^s_\e(X-y )  \rho^s_\e( \Xi - \eta) \rho^s_\e(X'-y)  \p_\xi \Xi' \cdot D \rho^v_\e(\Xi' - \eta) ] dy d\eta,
$$
where to simplify the notation we omitted  for $X, \  \Xi, \  X'  \ \text{and} \ \Xi'$.
\smallskip

\noindent  The integrands in the two terms behave similarly, hence, here,  we study the first one. To this end, we first notice that  $\p_\xi X $ being  independent of $y$ and $\eta$ can be factored  out of the integral, while 
$$
\int_{\R^N} [D \rho^s_\e(X-y)  \rho^v_\e(\Xi - \eta)  \rho^s_\e(X'-y )  \rho^s_\e( \Xi' - \eta) +   \rho^s_\e(X-y )  \rho^s_\e( \Xi - \eta) D \rho^s_\e(X'-y)  \rho^v_\e(\Xi' - \eta)]dy= 
$$
$$
= - \int_{\R^N} D_y [\rho^s_\e(X-y)  \rho^s_\e(X'-y ) ]  \rho^v_\e(\Xi - \eta) \rho^s_\e( \Xi' - \eta)dy=0.
$$

\smallskip

\noindent  Therefore the term we need to estimate is 
$$
\left|\left[ \p_\xi X - \p_\xi X' \right] \cdot   \int D \rho^s_\e(X-y)  \rho^v_\e(\Xi - \eta)  \rho^s_\e(X'-y )  \rho^s_\e( \Xi' - \eta) dy d\eta\right|,
$$
which is controlled by 
$$
\left| \p_\xi X - \p_\xi X' \right|  \int | D \rho^s_\e(X-y)|  \rho^v_\e(\Xi - \eta)  \rho^s_\e(X'-y )  \rho^s_\e( \Xi' - \eta) dy d\eta.
$$

\smallskip

\noindent It follows that 

$$ 
A(h, \e) \leq 
\sup_{\mathcal A}  \left| \p_\xi X - \p_\xi X' \right| 
\int | D \rho^s_\e(X-y)|  \rho^v_\e(\Xi - \eta)  \rho^s_\e(X'-y )  \rho^s_\e( \Xi' - \eta) dy d\eta dx' d\xi' ,
$$
where 
$$\mathcal A=: \{ (x,\xi,t) \in \R^N\times\R\times (0,h]: |X(0)-X'(0) |\leq C \e, \  |\Xi (0) - \Xi'(0) |\leq C\e \}.$$
\smallskip

\noindent  Since, in light of the Hamiltonian structure of the system of the characteristics, we have $dx' d\xi' = dX' d\Xi' $, there exists $C>0$ such that 
$$
  \int | D \rho^s_\e(X-y)|  \rho^v_\e(\Xi - \eta)  \rho^s_\e(X'-y )  \rho^s_\e( \Xi' - \eta) dy d\eta dx' d\xi' =  \int | D \rho^s_\e(X-y)|  \rho^v_\e(\Xi - \eta)  dy d\eta \leq C/{\e},
$$
and the estimate is reduced to showing that 
$$
\sup_{\mathcal A} |\p_\xi X - \p_\xi X'| C/\e \leq \omega(h).
$$

\smallskip

\noindent  The last inequality follows from  a cancelation property proved in \cite{LSbook}, which yields  that the characteristics  \eqref{characBack} satisfy, with variable $s =W(t)$,
$$
\left| \p_\xi X(s) - \p_\xi X' (s) \right| \leq C s \e, 
$$
whenever the data at $s=0$ are $\e$ close.
\end{proof}

\section{The existence of stochastic entropy solutions}
\label{sec:ex}
%
\subsection*{The general strategy of the existence.} We consider  a family of approximate problems using smooth local uniform approximations  $W_\e$ of $W$, that is paths $W_\e \in C^1(\R)$ such that, as $\e\to 0$ and for every $T>0$,   $W_\e \to W$  uniformly on 
$[0, T].$  We construct pathwise stochastic entropy solutions as the limit of solutions to \eqref{xscl} with smooth paths.

\noindent Given $u^0 \in (L^1\cap L^\infty)(\R^N)$ the conservation law \eqref{xscl} with $W_\e$ in place of  $W$ has a unique  entropy solution $u_\e$ with kinetic formulation 
\beq \left\{\begin{array}{l}
\p_t \chi_\e + \displaystyle  \dv_x [{\bf a} (x, \xi) \chi_\e  ] \, \dot W_\e -  \dv_\xi [b (x, \xi) \chi_\e  ]\, \dot W_\e   = \p_\xi m_{\e}   \quad  \text{ in } \  \R^N \times \R \times (0,\infty),
\\[3mm]
\chi_\e = \chi(u^0(\cdot), \cdot) \ \text{ on } \quad \R^N\times\R\times\{0\},
\end{array} \right.
\label{eq:skapprox}
\eeq
where, with the notation \eqref{eq:chi}, $\chi_\e(x, \xi,t) = \chi\big(u_\e(x,t) , \xi \big)$  and a measure $m_\e$  satisfying \eqref{measure0} and \eqref{measure} with $\|u_\e(\cdot,t)\|_\infty$ in place of $\|u(\cdot,t)\|_\infty$ and, in principle, $C_{T}$ depending on $\e.$
\smallskip

\noindent To prove that the $u_\e$'s converge, as $\e \to 0$, to a pathwise stochastic entropy solution of the sscl we need to obtain uniform in $\e$ a priori estimates, 
that is to show that, for each $T>0$,  there exist $C_T, K_T>0$, which are independent of $\e$, such that, for all $t \in [0,T]$, 
\beq 
 \int m_\e(x, \xi, t) dx d\xi dt \leq C_T  \| u^0 \|^2_{2} 
  \quad \text{and} \quad  \| u_\e(\cdot, t) \|_{\infty} + \|u_\e(\cdot,t)\|_1 \leq K_T.
\label{ex:apbu}
\eeq
\smallskip

\noindent As mentioned in Section~\ref{sec:kf}, the $L^1$-bound always holds true,  since in light  of \eqref{as:b} and the positivity of $m_\e$, the contraction property for the $\e$ problem yields that 
$$
\f{d}{dt} \int | u_\e(x,t) | dx = \f{d}{dt} \int | \chi_\e(x,t) | dx d \xi = - \dot W_\e (t) \int  b(x,0)  | \chi_\e(x,t) | dx d \xi  -  \int m_\e (x,0, t) dx \leq 0 .
$$
\smallskip

\noindent The $L^\infty$-estimate is more difficult to establish. At the end of this section  we prove a general bound that only uses \eqref{assumptions1} as well as we discuss some examples not covered by it. We also present the bound on the total mass of $m_\e$; we remark that this is a new bound even for  the theory of deterministic inhomogeneous scalar conservation laws.

\subsection*{The limiting process}  In what follows we fix  $T>0$ and work on $[0,T]$. We follow the construction based on weak limits proposed in Perthame \cite{Peuniq, PeKF}, where we refer for many of the technical details,  which relies  on the kinetic formulation and is an alternative to the construction by Young measures in Di Perna \cite{DipernaMVS}. 

\noindent Assuming \eqref{ex:apbu}   
we can extract subsequences such that, as $\e \to 0$,  
$$
u_\e \rightharpoonup u, \quad \chi_\e \rightharpoonup f  \text{in } L^{\infty}\text{weak-$\star$} \quad \text{and } \  m_\e \rightharpoonup m   \ \text{ in $M^1$ weak-$\star$} 
$$
with  $u$, $f$ and $m$ also satisfying  \eqref{measure} and  \eqref{ex:apbu}; here $M^1$ denotes the space of measures on $\R^N\times\R \times (0,\infty).$

\noindent In addition  some elementary structural properties of the nonlinear function $\chi$ in $\xi$ pass to the weak limit and give
\beq
 \sgn f (x, \xi ,t ) = \sgn (\xi), \  |f(x, \xi, t)| \leq 1 \ \text{and } \  \p_\xi  f(x,\xi,t) = \delta(\xi) - \nu(x, \xi, t),  
\label{ex:shape}
\eeq
where $\nu(x, \xi, t) \geq 0$ is the Young measure associated with the weak-$\star$ limit of $u_\e$. 
\smallskip

\noindent Passing to the weak limit in the definition of pathwise stochastic entropy solutions for  \eqref{eq:skapprox} and  using  $\rho_\e(x,\xi,t) = \hat \rho (x, \xi, W_\e(t)-W_\e(t_0))$  as test functions we obtain    
\beq \left\{\begin{array}{l}
d f+ \displaystyle  \dv_x [{\bf a} (x, \xi) f  ] \circ dW -  \dv_\xi [b (x, \xi) f ] \circ dW  = \p_\xi m \, dt  \   \text{ in}  \   \R^N \times \R \times (0,\infty),
\\[3mm]
f = \chi(u^0(\cdot), \cdot) \ \text{ on } \quad \R^N\times\R\times\{0\} .
\end{array} \right.
\label{ex:sk}
\eeq
We leave it up to the reader to check that the definition of pathwise solutions applies to this equation following the lines of Section~\ref{sec:kf}.

\subsection*{The existence result.} The existence theorem is:

\begin{theorem} Assume \eqref{flux}, \eqref{path}, \eqref{as:b}, \eqref{assumptions1} and $u^0 \in (L^1 \cap L^\infty)(\R^N).$  There exists a pathwise stochastic entropy solution $u\in L^\infty \big( (0,T); (L^1\cap L^\infty)(\R^N)\big)$, for all $T>0$,  of \eqref{xscl} which is is continuous at $t=0$ and satisfies \eqref{measure0}  and \eqref{measure}. The solution is given by $u(x,t)= \int_\R f(x, \xi,t) d\xi$ and  $f(x,\xi, t)= \chi(u(x,t), \xi)$ solves \eqref{ex:sk} and satisfies \eqref{ex:shape}. 
\label{th:ex}
\end{theorem}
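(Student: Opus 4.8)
The plan is to produce $f$ (hence $u=\int f\,d\xi$) as a weak-$\star$ limit of kinetic functions of entropy solutions of regularized problems, to pass to the limit in the pathwise formulation, and then to upgrade the weak limit to a genuine kinetic function by a rigidity argument that reuses the functional and the time discretization of the proof of Theorem~\ref{th:xsolution}. This is the limiting construction of Perthame \cite{Peuniq, PeKF}, an alternative to the Young-measure construction of \cite{DipernaMVS}. Concretely, first I would choose $W_\e\in C^1(\R)$ with $W_\e(0)=0$ and $W_\e\to W$ locally uniformly, and invoke the theory of Dalibard \cite{ALD}, extended to smooth multiplicative paths as in Section~\ref{sec:kf}, to get a unique entropy solution $u_\e$ of \eqref{xscl} with $W_\e$ in place of $W$, with kinetic formulation \eqref{eq:skapprox} and defect measure $m_\e$. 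The uniform $L^1$-bound for $u_\e$ is immediate from \eqref{as:b} and $m_\e\ge0$, as already shown; the uniform $L^\infty$-bound and the uniform bound on the total mass of $m_\e$ in \eqref{ex:apbu} are the ``general $L^\infty$- and $L^2$-bounds'' to be established at the end of this section, the former by a Gronwall estimate for $t\mapsto\|u_\e(\cdot,t)\|_\infty$ using only \eqref{assumptions1}, the latter by a refinement of \eqref{measure100} which, through the change of variables along the characteristics \eqref{char_convol}, removes the dependence on the variation of $W_\e$.

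Next, using \eqref{ex:apbu} I would extract a subsequence along which $u_\e\rightharpoonup u$, $\chi_\e\rightharpoonup f$ in $L^\infty$ weak-$\star$ and $m_\e\rightharpoonup m$ in $M^1$ weak-$\star$, the limits inheriting \eqref{measure} and \eqref{measure0}. The pointwise-in-$\xi$ structure of $\chi_\e$ passes to the limit and yields \eqref{ex:shape}, with $\nu\ge0$ the Young measure of $(u_\e)$. Passing to the limit in the identity \eqref{strongform1} written for the $\e$-problem, and using that the test functions $\hat\rho(\cdot,\cdot,W_\e(t)-W_\e(t_0))$ converge locally uniformly, together with their $\xi$-derivatives, to $\hat\rho(\cdot,\cdot,W(t)-W(t_0))$ — a consequence of the smoothness of the flow of \eqref{hatrho} in its time parameter and of $W_\e\to W$ — together with the convergence of the measure term, I would conclude that $f$ satisfies \eqref{strongform_reg}--\eqref{strongform1} for every $\rho^0\in C^\infty_{\rm b}(\R^N\times\R)$, i.e. \eqref{ex:sk}.

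The core step is the rigidity $f=\chi(u,\xi)$. Since $|f|\le1$, the nonnegative quantity $G(t):=\int_{\R^N}\int_\R\big(|f|-f^2\big)(x,\xi,t)\,d\xi\,dx$ vanishes exactly when $f\in\{-1,0,+1\}$ a.e., which, combined with \eqref{ex:shape}, is equivalent to $f=\chi(u,\xi)$ and $\nu=\delta_{u}$. To show $G\equiv0$ I would run the computation in the proof of Theorem~\ref{th:xsolution} in the diagonal case $\chi^{(1)}=\chi^{(2)}=f$ (so that the functional there equals $2G$), replacing $|f|$ by the convolution along characteristics $\bar q_\e\star f$ of Lemma~\ref{delta100} and $f^2$ by $(\rho_\e\star f)^2$; all the manipulations carry over, the only inputs being $|f|=\sgn(\xi)\,f$ and $\p_\xi f=\delta(\xi)-\nu\le\delta(\xi)$ from \eqref{ex:shape} in place of the exact identities for $\chi$, and the error terms are again bounded by $\omega(h)\int m$. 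After the same time discretization this gives $G(t)\le G(0)$ for a.e. $t$; since $f(\cdot,\cdot,0)=\chi(u^0,\cdot)$ is a kinetic function, $G(0)=0$, hence $G\equiv0$ and $f=\chi(u,\xi)$ with $u=\int f\,d\xi$. Then $u$ satisfies Definition~\ref{def:xsolution} with $m$ obeying \eqref{measure} and \eqref{measure0}, so it is a pathwise stochastic entropy solution; continuity at $t=0$ in $L^1(\R^N)$ follows from the uniform $L^1$-continuity at $0$ of the $u_\e$, or directly from \eqref{ex:sk}, the mass estimate and $f(0)=\chi(u^0)$.

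The main obstacle is this last step: a weak-$\star$ limit of kinetic functions is, a priori, only a dissipative solution, and eliminating the defect $|f|-f^2$ requires the full strength of \eqref{ex:sk} and \eqref{ex:shape}. Because the transport velocity in \eqref{eq:skapprox} blows up as $\e\to0$, the usual velocity-averaging compactness is unavailable, so one is forced to reuse the characteristic-based estimate and the discretization of Theorem~\ref{th:xsolution} rather than argue by a direct entropy computation. A secondary difficulty is the uniform $L^\infty$-bound needed in the first step, which for spatially dependent fluxes is not automatic and relies on \eqref{assumptions1}.
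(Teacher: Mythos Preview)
Your plan is essentially the paper's: approximate by smooth paths, extract weak limits from the uniform bounds \eqref{ex:apbu}, pass to the limit in the pathwise identity to get \eqref{ex:sk}, and then rerun the estimate of Theorem~\ref{th:xsolution} on the functional $G(t)=\int(|f|-f^2)$ to force $f=\chi(u,\xi)$. Two minor discrepancies are worth flagging: the uniform $L^\infty$-bound is \emph{not} obtained by a Gronwall on $t\mapsto\|u_\e(\cdot,t)\|_\infty$ --- that would bring in $\int|\dot W_\e|$ and fail to be uniform --- but rather by comparison with a smooth barrier $U(x,W_\e(t))$ built via characteristics on short time intervals whose length depends only on the modulus of continuity of $W$, followed by iteration; and the continuity at $t=0$ is argued directly on the limit equation (any weak cluster point $g$ of $\chi(\cdot,\cdot,t_n)$ as $t_n\to0$ satisfies $g\le\chi(u^0)$ by the defining identity, which forces strong convergence), not through uniform $L^1$-continuity of the approximants.
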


\proof  
The existence of $u_\e$ has been already discussed and the $L^\infty$- and $L^2$-bounds are proved below.  Therefore, we may pass to the limit as indicated above and obtain the pathwise stochastic entropy solution $f$ of \eqref{ex:sk}. It remains to prove that $f(x,\xi, t)= \chi(u(x,t), \xi)$.

\noindent Consider the functional
$$
G(t) :=  Ê\int [ \; | f ( x,\xi,t)|  -f^2 ( x,\xi,t) ] dx d\xi 
$$
in place of $F(\cdot)$ in Section~\ref{sec:proof} and note that $G(0)=0$ because $f( x,\xi,0)= \chi( x,\xi,0)=\chi(u^0(x),x)$. Following the same proof, we find that $G(t) \leq G(0)=0$ and,  since $|f|\leq 1,$ we conclude that $G\equiv 0$ and thus $f$ takes only the values $0$ or $1$. In other words, in view of  \eqref{ex:shape}, $f$ is an indicator function like $ \chi(u(x,t), \xi)$. 

\noindent The continuity at time $t=0$ follows from a similar procedure (see \cite{PeKF}, Prop.~4.1.7). For a sequence $t_n \to 0$, there is a weak limit $g(x, \xi)$ of $\chi( x,\xi,t_n)$, which satisfies  \eqref{ex:shape} and, by the definition of pathwise stochastic solutions, $g \leq \chi(u^0(x), \xi)$.  This means that $u(t_n)$ converges to $u^0$ strongly. 
\qed

\subsection*{The $L^\infty$-bound.} 
\noindent For $u^0 \in L^\infty(\R^N)$   and $M= \| u^0 \|_\infty$, we build, using the method of characteristics,  a local in time smooth solution $U$ of
\beq\label{upper}
\begin{cases}
U_t + \dv {\bf{A}}(x, U) = 0 \ \text{ in } \  \R^N \times \R, \\[2.5mm] 
U=M \ \text{ on } \ \R\times \{0\}.
\end{cases}
\eeq

\smallskip

\noindent We show below that this smooth solution exists  in $\R^N \times [-\tau, \tau]$ for some $\tau>0$ which does not depend on the size  of $M$. It then follows from the contraction property that $|u_\e(x,t)| \leq  U(x,W_\e(t))$  as long as  $W_\e(t) \in [-\tau, \tau]$. In view of the uniform continuity of the paths,  this last  statement holds for $t\in [0,\tau^*]$ with  $\tau^*$ depending only on $\tau$ and the modulus of continuity of the paths. Then we iterate the argument departing from the constant $\sup_{s\in [-\tau, \tau]}Ê\| U(\cdot,s)\|_\infty$ and built the smooth large solution on $(\tau, 2 \tau)$,  $(2\tau, 3 \tau)$, etc..  After a  finite  number of steps of order $(T/\tau^*+1)$, we reach the final time and obtain  a uniform bound. 

\noindent To construct the smooth solution on $[-\tau, \tau]$ we argue as follows. Departing from a constant $U(x,0)=M$, the smooth solution of  \eqref{upper} 
is built  by the method of characteristics (see \eqref{characF}) with initial condition $Y(0)= x$ and $\zeta(0)=M$ as long as they do not intersect.
This is possible  as long as $\p_x Y(t)$ is invertible.  Notice that   $(\p_x Y, \p_x \zeta)$ solves a system of differential equations (the linearization of \eqref{characF} along $(Y,\zeta)$) with coefficients which, in view  \eqref{assumptions1}, are uniformly bounded independently of $M$.  Since  $\text{det}\p_x Y(0) =1$, the matrix $\p_x Y(t)$ remains invertible for all $t\in [-\tau, \tau]$ for some uniform $\tau>0$. Because the solution is smooth, it generates a solution $U(x, W(t))$ for the stochastic equation \eqref{xscl} on the time interval $(0, \tau^*)$ with $\tau^*>0$  defined in the previous paragraph.
\smallskip

\noindent Next mention some special cases which do not fall in the general theory. 
A classical problem for applications  is when there exist two ordered bounded steady states, that is smooth and bounded $k_\pm$ such that $k_-\leq k_+$  and $\dv A(x, k_\pm (x)) = 0$. If  $k_- \leq u^0  \leq k_+$, then we 
find the estimate
$$
  \| u(t) \|_{\infty} \leq \max \left( \| k_- \|_{\infty}, \| k_+ \|_{\infty} \right).
$$
For instance when $N=1$ and $A(x,u) = c(x) u^2$ with $c(x) \geq c_m >0$, one can choose $k_\pm(x) = \pm \lb_\pm c(x)^{-1/2}$. When $N=2$ and $A(x,u) = {}^\perp\!D V(x)  B(u)$, one can choose constants. Another  example, which is a model  for multiphase flow in a porous medium,  is  $A(x,u) =  V(x)  u(1-u)$ and $k_-=0$, $k_+=1$ give the physical invariant region.

\subsection*{The $L^2$-bound.} For $u^0 \in L^2 (\R^N)$, we prove  the control of the total mass of the measure and employ again an iteration with  uniform time steps.

\noindent We choose $\rho(x,\xi, 0) = \xi$. The solution $\rho(x,\xi,t)$ of the linear transport equation is $\rho(x,\xi,t)= \Xi_{(t, x,\xi)}(0)$, where as before  $(X_{(t,x,\xi)}(0), \Xi_{(t,x,\xi)}(0))$ are the backwards characteristics starting at $t$. From this representation, we  conclude that there exists  $\tau>0$ such that, for $0 \leq t \leq \tau$,  $ \p_\xi \rho(x,\xi, t) \geq \f 12$ and $ \rho(x,\xi, t) \geq \f \xi 2$ for $\xi \geq 0$ and $ \rho(x,\xi, t) \leq \f \xi 2$ for $\xi \leq 0$.

\smallskip

\noindent The definition of stochastic solution yields 
\beq\label{takis10}
\int_0^\tau \int m_{\e}  (x, \xi, t) \p_\xi \rho(x,\xi,t)  dx d\xi dt + \int \rho(x,\xi,\tau) \chi(x,\xi,t) dx d\xi  =  \int \xi \chi(x,\xi,0)dx d\xi = \f 12 \| u^0\|_2^2.
\eeq

\noindent The choice of $\tau$  and the properties of $\rho(x,\xi,t) $  imply  immediately 
$$
\f 12  \int_0^\tau \int m_{\e}  (x, \xi, t)  dx d\xi dt + \f 14\|u(\cdot, \tau)\|_2^2  \leq \f 12  \| u^0\|_2^2 .
$$

\noindent Given a final time $T$, we iterate, as in the proof of the $L^\infty$-bound,  a finite number of times depending on the modulus of $W_\e(\cdot)$, which is, however, independent of $\e$ and we conclude. 

\subsection*{Acknowledgement}  We would like to thank Benjamin Gess for reading  carefully a preliminary version of this note and pointing out several items that needed improvement.

\bibliographystyle{plain}
\bibliography{Biblio}

\bigskip
\bigskip

\noindent ($^{1}$) Coll{\`e}ge de France and CEREMADE, Universit{\'e} de Paris-Dauphine
\\
 1, Place Marcellin Berthelot \\
 75005 Paris Cedex 5, France \\
email: lions@ceremade.dauphine.fr
\\ \\
\noindent ($^{2}$) Sorbonne Universit{\'e}s, UPMC, univ. Paris 06,\\
      CNRS UMR 7598 Laboratoire J.-L. Lions, BC187,\\
       4, place Jussieu,  F-75252 Paris 5\\ 
       and INRIA Paris-Rocquencourt, EPC Mamba \\
email: benoit.perthame@upmc.fr
\\ \\
($^{3}$)  Department of Mathematics \\
             University of Chicago \\
             Chicago, IL 60637, USA \\
            email: souganidis@math.uchicago.edu
\\ \\
($^{4}$)  Partially supported by the National Science Foundation grants DMS-0901802 and DMS-1266383

\end{document}